\title{A Refinement of the Crank-Mex Theorem}
\def\blfootnote{\xdef\@thefnmark{}\@footnotetext}
\author{by\\George E. Andrews and Moshe Newman}
\date{}
\numberwithin{equation}{section}
\newtheorem{theorem}{Theorem}
\newtheorem*{theorem*}{Theorem}
\theoremstyle{remark}
\theoremstyle{definition}
\newtheorem{lemma}[theorem]{Lemma}
\newtheorem*{lemma*}{Lemma}
\newtheorem*{conjecture*}{Conjecture}
\newcommand{\stirling}[2]{\genfrac{[}{]}{0pt}{}{#1}{#2}}
\DeclareMathOperator{\mex}{mex}
\begin{document}
\maketitle


\begin{abstract}
It is proved that the number of partitions of $n$ with odd mex and $k$ parts that aren't ones equals the number of partitions of $n$ with nonnegative crank and $k$ parts that aren't ones.\blfootnote{The first author is partially supported by Simons Foundation Grant 633284}\blfootnote{AMS Classification:11P81, 05A19}
\blfootnote{Key Words: Partitions, crank, mex}
\end{abstract}

\section{Introduction}\label{Intro}

In 1944, Freeman Dyson \cite{5} conjectured the existence of a partition statistic, which he called the \underline{crank}. The crank was conjectured to provide a combinatorial explanation of the Ramanujan congruence
\[p(11n+6)\equiv0\pmod{11},\label{1.1}\]
where $p(n)$ is the number of partitions of $n$. In \cite{2} (c.f. \cite{6}), the crank was finally found. If we let $\omega(\pi)$ denote the number of ones appearing in the partition $\pi$ and $\eta(\pi)$ denote the number of parts of $\pi$ that are greater than $\omega(\pi)$, then the crank of $\pi,c(\pi)$, is defined by
\begin{equation}
\label{1.2} c(\pi)=\left\{
\begin{array}{cr}
    \lambda_1, & \text{if}~\omega(\pi)=0; \\
    \eta(\pi)-\omega(\pi), & \text{if}~\omega(\pi)>0,
\end{array}
\right.
\end{equation}
where $\lambda_1$ is the largest part of $\pi$.

In three independent papers \cite{4,7,11}, it was shown that the following is true.
\begin{theorem}\label{thm1}
    The number of partitions of $n$ with nonnegative crank is equal to the number of partitions of $n$ with odd $\mex$, where $\mex$ is the least positive integer that is not a part of $\pi$.
\end{theorem}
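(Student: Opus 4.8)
The plan is to establish Theorem~\ref{thm1} by showing that the two counting functions have the same generating function, namely $S(q)/(q;q)_\infty$, where $S(q)=\sum_{j\ge0}(-1)^jq^{j(j+1)/2}$ and $(q;q)_\infty=\prod_{n\ge1}(1-q^n)$.

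I would first handle the mex side. A partition satisfies $\mex\ge k$ exactly when each of $1,2,\dots,k-1$ appears at least once; removing one copy of each leaves an unrestricted partition, so the generating function for $\mex\ge k$ is $q^{\binom{k}{2}}/(q;q)_\infty$. Writing the event $\mex=k$ as the difference of $\mex\ge k$ and $\mex\ge k+1$ and summing over odd $k$, I get
\[
\sum_{\substack{k\ge1\\ k\text{ odd}}}\frac{q^{\binom{k}{2}}-q^{\binom{k+1}{2}}}{(q;q)_\infty}
=\frac{1}{(q;q)_\infty}\sum_{j\ge0}(-1)^jq^{j(j+1)/2},
\]
the last equality holding because $\binom{k}{2}$ and $\binom{k+1}{2}$ are consecutive triangular numbers, so over odd $k$ the triangular exponents each appear exactly once with alternating sign. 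This is $S(q)/(q;q)_\infty$.

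For the crank side I would use the crank generating function $\sum_{m,n}M(m,n)z^mq^n=(q;q)_\infty/\big((zq;q)_\infty(q/z;q)_\infty\big)$ together with the explicit formula for its $z^m$-coefficient, valid for $m\ge0$,
\[
\sum_{n\ge0}M(m,n)q^n=\frac{1}{(q;q)_\infty}\sum_{n\ge1}(-1)^{n-1}q^{\binom{n}{2}+mn}\bigl(1-q^n\bigr).
\]
The number of partitions of $n$ with nonnegative crank is $\sum_{m\ge0}M(m,n)$, so I would sum this formula over $m\ge0$ and interchange the two summations; the inner geometric series $\sum_{m\ge0}q^{mn}=1/(1-q^n)$ then cancels the factor $1-q^n$, leaving
\[
\sum_{m\ge0}\sum_{n\ge0}M(m,n)q^n=\frac{1}{(q;q)_\infty}\sum_{n\ge1}(-1)^{n-1}q^{\binom{n}{2}}=\frac{S(q)}{(q;q)_\infty}.
\]
Since both generating functions equal $S(q)/(q;q)_\infty$, their coefficients agree for all $n$, which is the theorem. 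A brief separate check at $n=0,1$ would close the small-$n$ gap, where the formal $z^0$-coefficient of the crank series is mildly exceptional but the sum $\sum_{m\ge0}M(m,n)$ still returns the correct count.

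The hard part is establishing the explicit formula for $\sum_n M(m,n)q^n$: this is the only step carrying genuine analytic content, and deriving it from the product $C(z,q)$ requires the partial-fraction (Appell--Lerch) expansion of the crank generating function rather than elementary manipulation. Once that formula is in hand, the telescoping on the mex side and the geometric-series collapse on the crank side are both routine.
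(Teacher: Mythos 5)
Your argument is correct, but it takes a genuinely different route from the paper. The paper never proves Theorem~\ref{thm1} directly: it proves the refinement Theorem~\ref{thm2} --- introducing a variable $z$ that tracks the number of parts that are not ones, establishing $M(z,q)=K(z,q)$ coefficient-by-coefficient with the help of a $q$-binomial lemma --- and then obtains Theorem~\ref{thm1} by summing over $k$, i.e.\ by setting $z=1$. Your proof works entirely with the unrefined ($z=1$) generating functions: the telescoping $\sum_{k\ \mathrm{odd}}\bigl(q^{\binom{k}{2}}-q^{\binom{k+1}{2}}\bigr)=\sum_{j\ge0}(-1)^jq^{j(j+1)/2}$ on the mex side is clean and complete, and on the crank side the collapse of the geometric series $\sum_{m\ge0}q^{mn}$ against the factor $1-q^n$ is also correct, including your observation that the $n=1$ anomaly of the crank generating function ($M(0,1)=-1$, $M(\pm1,1)=1$) is harmless because $\sum_{m\ge0}M(m,1)=0$ still matches the true count. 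This is essentially the classical argument behind the references the paper cites for Theorem~\ref{thm1} \cite{4,7,11}. Its one nontrivial input --- the partial-fraction (Appell--Lerch) formula $\sum_{n}M(m,n)q^n=(q;q)_\infty^{-1}\sum_{n\ge1}(-1)^{n-1}q^{\binom{n}{2}+mn}(1-q^n)$ for $m\ge0$ --- is quoted rather than derived, so your write-up leans on an external identity much as the paper leans on its Theorem~\ref{thm2}; you should either cite it precisely or include its derivation from the expansion of $(q;q)_\infty/\bigl((zq;q)_\infty(q/z;q)_\infty\bigr)$. As for what each approach buys: yours is shorter and essentially self-contained modulo that one standard formula, but it cannot be upgraded to track the number of non-one parts, since summing over $m\ge0$ discards the crank variable before any refinement can be extracted; the paper's longer route yields the strictly stronger Theorem~\ref{thm2}, of which Theorem~\ref{thm1} is the $z=1$ specialization.
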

Our object in this paper is to prove the following:
\begin{theorem}\label{thm2}
    For $n\ge1$, the number of partitions of $n$ with nonnegative crank and $k$ parts that aren't ones equals the number of partitions of $n$ with odd $\mex$ and $k$ parts that aren't ones.
\end{theorem}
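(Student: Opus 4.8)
The plan is to attach to each statistic the bivariate generating function $\sum_{n,k}(\cdots)\,z^{k}q^{n}$, in which $q$ marks the size and $z$ marks the number of parts exceeding $1$, and then to prove that the two generating functions are identical; extracting the coefficient of $z^{k}q^{n}$ then gives the theorem. Throughout write $P=P(z,q)=\prod_{i\ge 2}\frac{1}{1-zq^{i}}$ for the generating function of partitions into parts $\ge 2$ (with $z$ marking the number of parts). The first step is to set aside the partitions with no $1$'s: a partition has $\mex=1$ precisely when $\omega(\pi)=0$, and every such nonempty partition has crank $\lambda_{1}>0$, so these partitions lie in \emph{both} families, contribute exactly $P$ to each side, and have $k$ equal to their total number of parts. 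It therefore suffices to match the partitions containing at least one $1$: those with odd $\mex\ge 3$ against those with $\omega(\pi)\ge 1$ and $c(\pi)\ge 0$.

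For the $\mex$ side I would argue that a partition with $\mex=2m+1$ ($m\ge 1$) contains each of $1,\dots,2m$, omits $2m+1$, and is otherwise unrestricted. Collecting the (at least one) $1$'s into $\frac{q}{1-q}$, inserting the mandatory parts $2,\dots,2m$, and deleting the forbidden part $2m+1$ from $P$ produces two families of monomials times $\frac{q}{1-q}P$, indexed by $m$. The decisive observation is that, when written in terms of the exponent $a$ of $z$, both families carry the same $q$-exponent $a(a+3)/2$, the odd $a$ appearing with sign $+$ and the even $a$ with sign $-$; the double sum therefore collapses to a partial-theta series, giving
\[
G_{\mathrm{mex}}=P+\frac{qP}{1-q}\sum_{a\ge 1}(-1)^{a-1}z^{a}q^{a(a+3)/2}.
\]

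For the crank side I would sum over $w=\omega(\pi)\ge 1$. With $w$ fixed, the parts lying in $[2,w]$ contribute $\prod_{i=2}^{w}\frac{1}{1-zq^{i}}$, while the requirement $\eta(\pi)\ge\omega(\pi)$ forces at least $w$ parts to exceed $w$, contributing $\sum_{\eta\ge w}\frac{z^{\eta}q^{(w+1)\eta}}{(q;q)_{\eta}}$. Euler's identity $\sum_{\eta\ge 0}x^{\eta}/(q;q)_{\eta}=1/(x;q)_{\infty}$ applied to the \emph{unrestricted} tail recombines the product with that tail into $P$, so that the crank-nonnegative generating function (over $\omega\ge1$) equals $\frac{qP}{1-q}-G^{<0}$, where $G^{<0}$ is the generating function of the crank-\emph{negative} partitions (those with $\eta<\omega$). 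Comparing with the $\mex$ side, the whole theorem reduces to the single identity
\[
G^{<0}=\frac{qP}{1-q}\sum_{a\ge 0}(-1)^{a}z^{a}q^{a(a+3)/2}.
\]

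The main obstacle is exactly this identity, which equates a genuinely $w$-dependent double sum over crank-negative partitions with a clean partial-theta series. I expect to prove it by a sign-reversing involution of Franklin type: interpret the term $(-1)^{a}z^{a}q^{a(a+3)/2}$ as overlaying the staircase $(a+1,a,\dots,2)$ onto a partition counted by $\frac{qP}{1-q}$, and construct an involution that cancels all signed configurations except the crank-negative partitions carrying the empty staircase $a=0$. A purely $q$-series alternative is to transform the inner sum $\sum_{\eta<w}$ by the $q$-binomial theorem (or Heine's transformation) and resum in $w$. As a consistency check one can verify---as I have through the coefficient of $z^{1}$---that both sides agree, which already pins down the exponents and signs recorded above; establishing the identity in full then yields the equality of the two counts in Theorem~\ref{thm2}.
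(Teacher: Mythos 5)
Your reduction is sound and, after unwinding notation, arrives at exactly the identity the paper proves: your $G_{\mathrm{mex}}$ is the paper's $M(z,q)$ (put $a=n-1$, so $\binom{n+1}{2}=\tfrac{a(a+3)}{2}+1$), and your conditioning on $w=\omega(\pi)$, followed by splitting the $\eta$-sum into the full sum minus the range $\eta<w$, reproduces the paper's $K(z,q)$ by a route that is arguably cleaner than the paper's derivation from the Andrews--Garvan crank generating function. The $z^{0}$ and $z^{1}$ checks are correct. But the argument stops precisely where the content of the theorem begins: the identity
\[
\sum_{w\ge 1}q^{w}\prod_{i=2}^{w}\frac{1}{1-zq^{i}}\sum_{\eta=0}^{w-1}\frac{z^{\eta}q^{(w+1)\eta}}{(q;q)_{\eta}}
=\frac{q}{1-q}\cdot\frac{1}{(zq^{2};q)_{\infty}}\sum_{a\ge 0}(-1)^{a}z^{a}q^{a(a+3)/2}
\]
is only asserted, with two candidate strategies (a Franklin-type involution; ``transform and resum in $w$''), neither of which is carried out. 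Everything before this point is bookkeeping; this identity \emph{is} Theorem~\ref{thm2}, so as written you have a correct reduction plus a conjecture, not a proof.

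To close the gap along your $q$-series route you need one nontrivial ingredient the proposal never identifies: because the inner sum over $\eta$ is truncated at $w-1$ (equivalently, the complementary sum starts at $\eta=w$), Euler's identity does not apply to it, and after clearing $(zq^{2};q)_{\infty}$ and extracting the coefficient of $z^{N}$ you are left with a \emph{partial} alternating sum of Gaussian polynomials. The paper isolates exactly this as its Lemma,
\[
\sum_{m=0}^{H}(-1)^{m}q^{\binom{m}{2}}\stirling{N}{m}=(-1)^{H}q^{\binom{H+1}{2}}\stirling{N-1}{H},
\]
proves it by induction on $H$ via the $q$-Pascal recurrence, and then finishes with the full $q$-binomial theorem to evaluate the remaining sum over $w$. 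Your sketch of a sign-reversing involution overlaying staircases $(a+1,a,\dots,2)$ is plausible but would itself be a substantive construction (essentially the combinatorial content of Konan's work cited in the paper's conclusion). Either supply the truncated $q$-binomial lemma and the coefficient extraction, or construct the involution in full; without one of these the proof is incomplete.
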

Note that Theorem 1 follows from Theorem \ref{thm2} by summing over all $k\ge0$ in Theorem \ref{thm2}.

In the next section, we identify the relevant generating functions and prove a necessary lemma.
\section{Background}\label{section2}
For brevity, we require the following notation:
\begin{align*}
    (a;q)_n&=\prod_{j=0}^{n-1}(1-aq^j)\quad (0\le n\le\infty).\\
    \stirling{A}{B}&=\left\{
\begin{array}{cr}
    0, & \text{if}~B<0~\text{or}~B>A; \vspace{5pt}\\
    \displaystyle\frac{(q;q)_A}{(q;q)_B(q;q)_{A-B}}, & \text{if}~0\le B\le A.
\end{array}
\right.
\end{align*}
Let $M(z,q)$ denote the generating function for partitions with odd $\mex$ where the exponent on $q$ is the number being partitioned and the exponent on $z$ is the number of parts that aren't ones. We present $M(z,q)$ with two terms. The first generates the partitions where the mex is 1, i.e. the partition has no ones at all. The entry $n$ in the second term generates the partitions with mex equal to $2n+1(n>0)$.
\begin{align*}\label{2.1}
    M(z,q)&=\frac{1}{(zq^2;q)_\infty}+\sum_{n\ge1}\frac{z^{2n-1}q^{1+2+\cdots+2n}(1-zq^{2n+1})}{(1-q)(zq^2;q)_\infty}\\
    &=\frac{1}{(zq^2;q)_\infty}\left(1+\sum_{n\ge1}\frac{z^{2n-1}q^{n(2n+1)}(1-zq^{2n+1})}{(1-q)}\right)\\
    &=\frac{1}{(zq^2;q)_\infty}\left(1+\sum_{n\ge2}\frac{(-1)^n z^{n-1} q^{\binom{n+1}{2}}}{(1-q)}\right).
\end{align*}
Let $K(z,q)$ denote the generating function for partitions with nonnegative crank where the exponent on $q$ is the number being partitioned and the exponent on $z$ is the number of parts that aren't ones.

To find $K(z,q)$ we must first look at the original generating function for the crank itself. Here the exponent on $y$ is the crank \cite{2}.

\begin{align*}
    \frac{(q;q)_\infty}{(yq;q)_\infty(q/y;q)_\infty}&=\frac{(1-q)(q^2;q)_\infty}{(yq;q)_\infty(q/y;q)_\infty}\\
    &=\frac{(1-q)}{(yq;q)_\infty}\left(1+\sum_{n=1}^\infty\frac{(yq;q)_n q^ny^{-n}}{(q;q)_n}\right)\\
    &\text{by \cite[p. 17, Th. 2.1, $t=qy^{-1},a=yq$]{1}}\\
    &=(1-q)\sum_{n\ge0}\frac{y^nq^n}{(q;q)_n}+(1-q)\sum_{n=1}^\infty\frac{(yq;q)_n q^ny^{-n}}{(q;q)_n(yq;q)_\infty}\\
    &=(1-q)+\sum_{n\ge1}\frac{y^nq^n}{(q^2;q)_{n-1}}+\sum_{n=1}^\infty\frac{q^ny^{-n}}{(q^2;q)_{n-1}(yq^{n+1};q)_\infty}\\
    &\text{(We note that the first sum gives the crank if the partition}\\
    &\text{has no ones and the $n^{th}$ term of the second sum gives the}\\
    &\text{partitions where there are $n$ ones)}\\
    &=(1-q)+\sum_{n\ge1}\frac{y^nq^n}{(q^2;q)_{n-1}}+\sum_{n=1}^\infty\frac{q^ny^{-n}}{(q^2;q)_{n-1}}\sum_{m=0}^\infty\frac{y^mq^{m(n+1)}}{(q;q)_m}.
\end{align*}
In this last expression, we must insert $z$ to keep track of parts $>1$ and we must only start the $m$-sum at $m=n$ in order to have nonnegative crank. Finally we set $y=1$ in that we are no longer interested in the crank but only that it is nonnegative.

Hence
\begin{align*}
    K(z,q)&=(1-zq)+\sum_{n\ge1}\frac{zq^n}{(zq^2;q)_{n-1}}+\sum_{n=1}^\infty\frac{q^n}{(zq^2;q)_{n-1}}\sum_{m=n}^\infty\frac{z^mq^{m(n+1)}}{(q;q)_m}\\
    &=(1-zq)+\sum_{n\ge1}\frac{zq^n}{(zq^2;q)_{n-1}}+\sum_{\underset{m\ge0}{n\ge1}}^\infty\frac{q^{n+(m+n)(n+1)}z^{m+n}}{(zq^2;q)_{n-1}(q;q)_{m+n}}.
\end{align*}
Note that we are only interested in partitions of $n$ for $n>1$. Thus the seemingly arbitrary $1-zq$ in $K(z,,q)$ does not affect cases for $n>1$, but is necessary for proving $M(z,q)=K(z,q)$.

We close this section with the following lemma:
\begin{lemma}
    \[\sum_{m=0}^H(-1)^mq^{\binom{m}{2}}\stirling{N}{m}=(-1)^Hq^{\binom{H+1}{2}}\stirling{N-1}{H}.\]
\end{lemma}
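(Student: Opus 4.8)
The plan is to prove the identity by induction on the upper summation limit $H$, holding $N$ fixed as a parameter. The engine of the argument will be the $q$-analogue of Pascal's rule in the form $\stirling{N}{m}=\stirling{N-1}{m-1}+q^m\stirling{N-1}{m}$; this is precisely the version that produces the shifted Gaussian binomial coefficient $\stirling{N-1}{H}$ appearing on the right-hand side, as opposed to the companion recurrence $\stirling{N}{m}=q^{N-m}\stirling{N-1}{m-1}+\stirling{N-1}{m}$, which would lead elsewhere. (One could alternatively recognize the stated identity as a partial sum of the finite $q$-binomial theorem $\sum_{m=0}^N(-1)^mq^{\binom{m}{2}}\stirling{N}{m}z^m=(z;q)_N$, but the direct induction is cleaner and self-contained.)

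For the base case $H=0$, the left side collapses to $\stirling{N}{0}=1$, while the right side is $(-1)^0q^{\binom{1}{2}}\stirling{N-1}{0}=1$ since $\binom{1}{2}=0$, so the two agree. For the inductive step I would assume the claim at $H-1$, namely $\sum_{m=0}^{H-1}(-1)^mq^{\binom{m}{2}}\stirling{N}{m}=(-1)^{H-1}q^{\binom{H}{2}}\stirling{N-1}{H-1}$, then append the single term $m=H$ and substitute, obtaining
\[
\sum_{m=0}^{H}(-1)^mq^{\binom{m}{2}}\stirling{N}{m}=(-1)^{H-1}q^{\binom{H}{2}}\stirling{N-1}{H-1}+(-1)^Hq^{\binom{H}{2}}\stirling{N}{H}.
\]
Factoring out $(-1)^Hq^{\binom{H}{2}}$ leaves the combination $\stirling{N}{H}-\stirling{N-1}{H-1}$, which the Pascal recurrence above rewrites as $q^H\stirling{N-1}{H}$. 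The exponent of $q$ then consolidates via $\binom{H}{2}+H=\binom{H+1}{2}$, delivering exactly $(-1)^Hq^{\binom{H+1}{2}}\stirling{N-1}{H}$ and closing the induction.

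The computation is almost entirely routine, and the closest thing to an obstacle is bookkeeping rather than any genuine difficulty: one must pick the correct one of the two $q$-Pascal recurrences, and one should confirm that the boundary conventions built into the definition of $\stirling{A}{B}$ (vanishing when $B<0$ or $B>A$) keep every step valid in the degenerate ranges—in particular when $H$ exceeds $N-1$, where $\stirling{N-1}{H}=0$ forces both sides to vanish and the induction still goes through. With that convention in place, no separate treatment of edge cases is required.
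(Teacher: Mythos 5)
Your proof is correct and follows essentially the same route as the paper: induction on $H$ with base case $H=0$, splitting off the $m=H$ term, and applying the $q$-Pascal recurrence $\stirling{N}{H}-\stirling{N-1}{H-1}=q^{H}\stirling{N-1}{H}$ before consolidating the exponent via $\binom{H}{2}+H=\binom{H+1}{2}$. The paper performs exactly this computation (merely leaving the choice of recurrence implicit), so no further comparison is needed.
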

\begin{proof}
    Both sides equal 1 when $H=0$. Now assume the identity is true at $H-1$. Consequently,
    \begin{align*}
        \sum_{m=0}^H(-1)^mq^{\binom{m}{2}}\stirling{N}{m}&= \sum_{m=0}^{H-1}(-1)^mq^{\binom{m}{2}}\stirling{N}{m}+ (-1)^Hq^{\binom{H}{2}}\stirling{N}{H}\\
        &=(-1)^{H-1}q^{\binom{H}{2}}\stirling{N-1}{H-1}+(-1)^Hq^{\binom{H}{2}}\stirling{N}{H}\\
        &=(-1)^Hq^{\binom{H}{2}}\left(\stirling{N}{H}-\stirling{N-1}{H-1}\right)\\
        &=(-1)^Hq^{\binom{H}{2}+H}\stirling{N-1}{H}=(-1)^Hq^{\binom{H+1}{2}}\stirling{N-1}{H}
    \end{align*}
\end{proof}
\section{Proof of Theorem \ref{thm2}}\label{section3}
This only requires us to prove that
\[M(z,q)=K(z,q),\]
i.e.
\[\frac{1}{(zq^2;q)_\infty}\left(1+\sum_{n\ge2}\frac{
(-1)^nz^{n-1}q^{\binom{n+1}{2}}}{1-q}\right)\]
\[=1-zq+\sum_{n\ge1}\frac{zq^n}{(zq^2;q)_{n-1}}+\sum_{\underset{m\ge0}{n\ge1}}\frac{q^{n+(m+n)(n+1)}z^{m+n}}{(zq^2:q)_{n-1}(q;q)_{m+n}},\]
and multiplying both sides by $(zq^2;q)_\infty$ we see that our theorem is equivalent to
\begin{align*}
    &1+\sum_{n\ge2}\frac{
(-1)^nz^{n-1}q^{\binom{n+1}{2}}}{1-q}\\
&=(zq;q)_\infty+\sum_{n\ge1}zq^n(zq^{n+1};q)_\infty\\
&+\sum_{\underset{m\ge0}{n\ge1}}\frac{q^{n+(m+n)(n+1)}z^{m+n}(zq^{n+1};q)_\infty}{(q;q)_{m+n}}\\
&=\sum_{n\ge0}\frac{(-1)^nz^nq^{\binom{n+1}{2}}}{(q;q)_n}+\sum_{n\ge1}zq^n\sum_{s\ge0}\frac{(-1)^sz^sq^{s(n+1)+\binom{s}{2}}}{(q;q)_s}\\
&+\sum_{\underset{m\ge0}{n\ge1}}\frac{q^{n+(m+n)(n+1)}z^{m+n}}{(q;q)_{m+n}}\sum_{s\ge0}\frac{(-1)^sz^s q^{\binom{s+1}{2}+sn}}{(q;q)_s},
\end{align*}
where we have expanded the infinite products using \cite[p. 19, Cor. 2.2, eq. (2.2.6)]{1}.
To prove this last result we must show that the coefficients of $z^N$ on both sides are identical. It is clear that the constant term on each side is 1. So, we may take $N\ge1$.

The coefficient of $z^N$ on the left side is
\[\frac{(-1)^{N+1}q^{\binom{N+2}{2}}}{1-q}.\]
On the right we begin by noting that the coefficient of $z^N$ contributed by the first two expressions is
\[\frac{(-1)^Nq^{\binom{N+1}{2}}}{(q;q)_N}+\sum_{n\ge1}\frac{(-1)^{N-1}q^nq^{(N-1)(n+1)+\binom{N-1}{2}}}{(q;q)_{N-1}}.\]
Simplifying, we find
\begin{align*}
    &\frac{(-1)^Nq^{\binom{N+1}{2}}}{(q;q)_N}+\sum_{n\ge1}\frac{(-1)^{N-1}q^nq^{(N-1)(n+1)+\binom{N-1}{2}}}{(q;q)_{N-1}}\\
    &=\frac{(-1)^Nq^{\binom{N+1}{2}}}{(q;q)_N}+\frac{(-1)^{N-1}q^{\binom{N}{2}}\cdot q^N}{(q;q)_{N-1}(1-q^N)}\\
    &=\frac{(-1)^Nq^{\binom{N+1}{2}}}{(q;q)_N}-\frac{(-1)^Nq^{\binom{N+1}{2}}}{(q;q)_N}=0.
\end{align*}
Thus, since these two contributions cancel each other, we see that the entire contribution to the coefficient of $z^N$ on the right side comes from the third expression:
\begin{align*}
    &\sum_{\overset{\underset{m\ge0}{n\ge1}}{m+n\le N}}\frac{q^{\left(n+(m+n)(n+1)+\binom{N-n-m+1}{2}+(N-n-m)n\right)}(-1)^{N-n-m}}{(q;q)_{m+n}(q;q)_{N-m-n}}\\
    &=\sum_{n\ge1}\frac{q^{N+n+nN}}{(q;q)_N}\sum_{m=0}^{N-n}(-1)^{N-n-m}q^{\binom{N-n-m}{2}}\stirling{N}{n+m}\\
    &=\sum_{n\ge1}\frac{q^{N+n+nN}}{(q;q)_N}\sum_{m=0}^{N-n}(-1)^mq^{\binom{m}{2}}\stirling{N}{m}\\
    &=\sum_{n\ge1}\frac{q^{N+n+nN}}{(q;q)_N}(-1)^{N-n}q^{\binom{N-n+1}{2}}\stirling{N-1}{n-1}\quad(\text{by Lemma 3 with}~H=N-n)\\
    &=\frac{(-1)^Nq^{\binom{N+1}{2}+N}}{(q;q)_N}\sum_{n\ge1}(-1)^nq^{\binom{n+1}{2}}\stirling{N-1}{n-1}\\
    &=\frac{(-1)^{N-1}q^{\binom{N+1}{2}+N}}{(q;q)_N}\sum_{n\ge0}(-1)^nq^{\left(\binom{n+1}{2}+n+1\right)}\stirling{N-1}{n}\\
    &=\frac{(-1)^{N-1}q^{\binom{N+1}{2}+N}}{(q;q)_N}q(q^2;q)_{N-1}\quad(\text{by \cite[Ch. 3, p. 36, eq. (3.3.6)]{1}, the $q$-binomial theorem})\\
    &=\frac{(-1)^{N+1}q^{\binom{N+2}{2}}}{1-q}.
\end{align*}
This establishes the identity of the coefficients of $z^N$ on each side.

Thus Theorem \ref{thm2} is proved. $_\square$
\section{Conclusion}\label{conclusion}
Much has been written about the combinatorics of the crank and mex. The papers \cite{2,3,4,5,6,7,8,9,10,11} are all on this topic. We draw special attention to the deep work done in \cite{10}. We believe that it should be possible using Konan's method and results to provide a combinatorial proof of our theorem, but so far, this has eluded us.

\bibliographystyle{abbrv}

\noindent George E. Andrews\\
The Pennsylvania State University\\
University Park, PA 16802\\
gea1@psu.edu
\vspace{1em}\\
\noindent Moshe Newman\\
moshnoiman@gmail.com
\end{document}